\theoremstyle{plain}
\newtheorem{thm}{\protect\theoremname}[section]
\theoremstyle{definition}
\theoremstyle{definition}
\newtheorem{defn}[thm]{\protect\definitionname}
\theoremstyle{plain}
\newtheorem{cor}[thm]{\protect\corollaryname}
\theoremstyle{remark}
\newtheorem{rem}[thm]{\protect\remarkname}
\newtheorem{prop}[thm]{\protect\propositionname}
\providecommand{\corollaryname}{Corollary}
\providecommand{\definitionname}{Definition}
\providecommand{\examplename}{Example}
\providecommand{\remarkname}{Remark}
\providecommand{\theoremname}{Theorem}
\providecommand{\propositionname}{Proposition}
\begin{document}
\title{Green Measures for a Class of non-Markov Processes}
\author{\textbf{Herry Pribawanto Suryawan}\\
 Department of Mathematics, Sanata Dharma University\\
 55281 Yogyakarta, Indonesia\\
 Email: herrypribs@usd.ac.id \and \textbf{Jos{\'e} Lu{\'i}s
da Silva}\\
 CIMA, University of Madeira, Campus da Penteada,\\
 9020-105 Funchal, Portugal\\
 Email: joses@staff.uma.pt}
\date{\today}

\maketitle
\begin{abstract}
In this paper, we investigate the Green measure for a class of non-Gaussian
processes in $\mathbb{R}^{d}$. These measures are associated with
the family of generalized grey Brownian motions $B_{\beta,\alpha}$,
$0<\beta\le1$, $0<\alpha\le2$. This family includes both fractional
Brownian motion, Brownian motion, and other non-Gaussian processes.
We show that the perpetual integral exists with probability $1$ for
$d\alpha>2$ and $1<\alpha\le2$. The Green measure then generalizes
those measures of all these classes.
\\[.3cm]
\textbf{Keywords}: Fractional Brownian motion; generalized grey Brownian motion; Green measure; subordination.
\end{abstract}

\tableofcontents{}

\section{Introduction}

The goal of this paper (see Theorem~\ref{thm:Green-meas-ggBm}
and Corollary~\ref{cor:GM-ggBm} below) is to prove the existence
of the Green measure for a class of non-Gaussian processes in $\mathbb{R}^{d}$,
called generalized grey Brownian motion (ggBm for short). We denote
this family of processes by $B_{\beta,\alpha}$ with parameters $0<\beta\le1$ and $0<\alpha\le2$.
More precisely, for a Borel function $f:\mathbb{R}^{d}\longrightarrow\mathbb{R}$,
the potential of $f$ (see \cite{Blumenthal1968,Revuz-Yor-94} for
details) is defined as 
\begin{equation}
V_{\beta,\alpha}(f,x)=\int_{0}^{\infty}\mathbb{E}\big[f(x+B_{\beta,\alpha}(t))\big]\,\mathrm{d}t,\quad x\in\mathbb{R}^{d}.\label{eq:potential_of_f}
\end{equation}
We would like to investigate the class of functions $f$ for which
the potential of $f$ has the representation
\begin{equation}
V_{\beta,\alpha}(f,x)=\int_{\mathbb{R}^{d}}f(y)\mathcal{G}_{\beta,\alpha}(x,\mathrm{d}y),\label{eq:green-measure}
\end{equation}
where $\mathcal{G}(x,\cdot):=\mathcal{G}_{\beta,\alpha}(x,\cdot)$
is a Radon measure on $\mathbb{R}^{d}$ called Green measure corresponding
to the ggBm $B_{\beta,\alpha}$, see Definition~\ref{def:SP-GM} below.
First, we establish the existence of the perpetual integral (cf.~Theorem
\ref{thm:Green-meas-ggBm})
\[
\int_{0}^{\infty}f(x+B_{\beta,\alpha}(t))\,\mathrm{d}t
\]
with probability $1$. This leads to an explicit representation
of the Green measure for ggBm, namely (cf.~Corollary~\ref{cor:GM-ggBm})
\[
\mathcal{G}_{\beta,\alpha}(x,\mathrm{d}y)=\frac{D}{|x-y|^{d-2/\alpha}}\,\mathrm{d}y,\quad d\alpha>2,\quad 1<\alpha\le2,
\]
where $D$ is a constant that depends on $\beta,\alpha$, and the
dimension $d$; see Theorem~\ref{thm:Green-meas-ggBm} for the explicit expression. Note that
as $d\alpha>2$ and $1<\alpha\le2$, the Green measure $\mathcal{G}_{\beta,\alpha}(x,\cdot)$ exists for $d\ge2$, since $d>2/\alpha\in[1,2)$. The Brownian case ($\alpha=1)$ is covered only for $d\ge3$. 

We emphasize that the existence of the Green measure
for a given process $X$ is not always guaranteed. As an example,
the $d$-dimensional Brownian motion (Bm) starting at $x\in\mathbb{R}^{d}$ has a density given by $p_{t}(x,y)=(2\pi t)^{-d/2}\exp\big(-|x-y|^{2}/(2t)\big)$,
$y\in\mathbb{R}^{d}$. It is not difficult to see that $\int_{0}^{\infty}p_{t}(x,y)\,\mathrm{d}t$
does not exist for $d=1,2$. This implies the non-existence of the
Green measure of Bm for $d=1,2$. On the other hand, for $d\ge3$, the Green measure of Bm on $\mathbb{R}^{d}$ exists and is given by
$\mathcal{G}(x,\mathrm{d}y)=C(d)|x-y|^{2-d}\,\mathrm{d}y$, where $C(d)$ is a constant depending on
the dimension $d$; see \cite{KMS2020} and the references therein for more details. In a two-dimensional space, the Green measure of ggBm is determined by the parameter $\alpha$. The Green measure of ggBm for $d=1$ requires further analysis (for Bm see \cite{Armitage2001}, Ch.~4) which we will postpone for a future paper.

The paper is organized as follows. In Section~\ref{sec:ggBm} we
recall the definition and main properties of ggBm that will be needed
later. In Section~\ref{sec:GM-ggBm} we show the existence of the
perpetual integral with probability $1$, which leads to the explicit
formula for the Green measure for ggBm.  In Section~\ref{sec:Discussion-and-Conclusions}, we discuss the results obtained, connect them with other topics, and draw conclusions.

\section{Generalized Grey Brownian Motion}

\label{sec:ggBm}We recall the class of non-Gaussian processes, called the
generalized grey Brownian motion, which we study below. This class
of processes was first introduced by Schneider \cite{Schneider90a,Schneider90},
and was generalized by Mura et al.~(see \cite{Mura_Pagnini_08,Mura_mainardi_09})
as a stochastic model for slow/fast anomalous diffusion described
by the time fractional diffusion equation.

\subsection{Definition and Properties}

For $0<\beta\le1$ the Mittag-Leffler (entire) function $E_{\beta}$
is defined by the Taylor series 
\begin{equation}
E_{\beta}(z):=\sum_{n=0}^{\infty}\frac{z^{n}}{\Gamma(\beta n+1)},\quad z\in\mathbb{C},\label{MLF}
\end{equation}
where 
\[
\Gamma(z)=\int_{0}^{\infty}t^{z-1}e^{-t}\,dt,\quad z\in\mathbb{C},\,\mathrm{Re}(z)\geq0
\]
is the Euler gamma function.

\noindent The $M$-Wright function is a special case of the class
of Wright functions $W_{\lambda,\mu}$, $\lambda>-1$, $\mu\in\mathbb{C}$
via 
\[
M_{\beta}(z):=W_{-\beta,1-\beta}(-z)=\sum_{n=0}^{\infty}\frac{(-z)^{n}}{n!\Gamma(-\beta n+1-\beta)}.
\]
The special choice $\beta=1/2$ yields the Gaussian density on $[0,\infty)$
\begin{equation}
M_{\frac{1}{2}}(z)=\frac{1}{\sqrt{\pi}}\mathrm{exp}\left(-\frac{z^{2}}{4}\right).\label{eq:MWright_Gaussian}
\end{equation}

\noindent The Mittag-Leffler function $E_{\beta}$ is the Laplace
transform of the $M$-Wright function, that is, 
\begin{equation}
E_{\beta}(-s)=\int_{0}^{\infty}e^{-s\tau}M_{\beta}(\tau)\,d\tau.\label{eq:M_wright}
\end{equation}
The generalized moments of the density $M_{\beta}$ of order $\delta>-1$
are finite and are given (see \cite{Mura_Pagnini_08}) by 
\begin{equation}
\int_{0}^{\infty}\tau^{\delta}M_{\beta}(\tau)\,d\tau=\frac{\Gamma(\delta+1)}{\Gamma(\beta\delta+1)}.\label{eq:Mbeta-moments}
\end{equation}

\begin{defn}
Let $0<\beta\le1$ and $0<\alpha\le2$ be given. A $d$-dimensional
continuous stochastic process $B_{\beta,\alpha}=\big\{ B_{\beta,\alpha}(t),\,t\geq0\big\}$
starting at $0\in\mathbb{R}^{d}$ and defined on a complete probability
space $(\Omega,\mathcal{F},\mathbb{P})$, is a ggBm in $\mathbb{R}^{d}$
(see \cite{Mura_mainardi_09} for $d=1$) if: 
\begin{enumerate}
\item $\mathbb{P}(B_{\beta,\alpha}(0)=0)=1$, that is, $B_{\beta,\alpha}$
starts at zero $\mathbb{P}$-almost surely ($\mathbb{P}$-a.s.).
\item Any collection $\big\{ B_{\beta,\alpha}(t_{1}),\ldots,B_{\beta,\alpha}(t_{n})\big\}$
with $0\leq t_{1}<t_{2}<\dots<t_{n}<\infty$ has a characteristic function
given, for any $\theta=(\theta_{1},\ldots,\theta_{n})\in(\mathbb{R}^{d})^{n}$
with $\theta_{k}=(\theta_{k,1},\dots\theta_{k,d})$, $k=1,\dots,n$, by 
\begin{equation}
\mathbb{E}\left[\mathrm{exp}\left(\mathrm{i}\sum_{k=1}^{n}(\theta_{k},B_{\beta,\alpha}(t_{k}))_{\mathbb{R}^d}\right)\right]=E_{\beta}\left[-\frac{1}{2}\sum_{j=1}^{d}(\theta_{.,j},\gamma_{\alpha}\theta_{.,j})_{\mathbb{R}^{n}}\right],\label{eq:charact-func-ggBm}
\end{equation}
where $\mathbb{E}$ denotes the expectation w.r.t.~$\mathbb{P}$
and
\[
\gamma_{\alpha}:=\gamma_{\alpha,n}:=\big(t_{k}^{\alpha}+t_{j}^{\alpha}-|t_{k}-t_{j}|^{\alpha}\big)_{k,j=1}^{n}.
\]
\item The joint probability density function of $\big(B_{\beta,\alpha}(t_{1}),\ldots,B_{\beta,\alpha}(t_{n})\big)$
is equal to
\begin{equation}
\rho_{\beta}(\theta,\gamma_{\alpha})=\frac{(2\pi)^{-\frac{nd}{2}}}{(\mathrm{det}\gamma_{\alpha})^{d/2}}\int_{0}^{\infty}\tau^{-\frac{nd}{2}}\mathrm{e}^{-\frac{1}{2\tau}\sum_{j=1}^{d}(\theta_{\cdot,j},\gamma_{\alpha}^{-1}\theta_{\cdot,j})_{\mathbb{R}^{n}}}M_{\beta}(\tau)\,d\tau.\label{eq:fdd-density-ggBm}
\end{equation}
\end{enumerate}
\end{defn}

The following are the most important key properties of ggBm:
\begin{enumerate}
\item[(P1). ] For each $t\geq0$, the moments of any order of $B_{\beta,\alpha}(t)$ are given by 
\[
\begin{cases}
\mathbb{E}\big[|B_{\beta,\alpha}(t)|^{2n+1}\big] & =0,\\
\noalign{\vskip4pt}\mathbb{E}\big[|B_{\beta,\alpha}(t)|^{2n}\big] & =\frac{(2n)!}{2^{n}\Gamma(\beta n+1)}t^{\alpha n}.
\end{cases}
\]
\item[(P2).] The covariance function has the form 
\begin{equation}
\mathbb{E}\big[\big(B_{\beta,\alpha}(t),B_{\beta,\alpha}(s)\big)\big]=\frac{d}{2\Gamma(\beta+1)}\big(t^{\alpha}+s^{\alpha}-|t-s|^{\alpha}\big),\quad t,s\geq0.\label{eq:auto-cv-gBm}
\end{equation}
\item[(P3).] For each $t,s\geq0$, the characteristic function of the increments
is 
\begin{equation}
\mathbb{E}\big[e^{i(k,B_{\beta,\alpha}(t)-B_{\beta,\alpha}(s))}\big]=E_{\beta}\left(-\frac{|k|^{2}}{2}|t-s|^{\alpha}\right),\quad k\in\mathbb{R}^{d}.\label{eq:cf_gBm_increments}
\end{equation}
\item[(P4).] The process $B_{\beta,\alpha}$ is non-Gaussian, $\alpha/2$-self-similar
with stationary increments.
\item[(P5).] The ggBm is not a semimartingale. Furthermore, $B_{\alpha,\beta}$
cannot be of finite variation in $[0,1]$ and, by scaling and stationarity
of the increment, on any interval in $\mathbb{R}^{+}$. 
\item[(P5).] For $n=1$, the density $\rho_{\beta}(x,t)$, $x\in\mathbb{R}^{d}$,
$t>0$, is the fundamental solution of the following fractional
differential equation (see \cite{Mentrelli2015}) 
\[
\mathbb{D}_{t}^{2\beta}\rho_{\beta}(x,t)=\Delta_{x}\rho_{\beta}(x,t),
\]
where $\Delta_{x}$ is the $d$-dimensional Laplacian in $x$ and
$\mathbb{D}_{t}^{2\beta}$ is the Caputo-Dzherbashian fractional derivative;
see \cite{SKM1993} for the definition and properties.
\end{enumerate}

\subsection{Representations of Generalized Grey Brownian Motion}

The ggBm admits different representations in terms of well-known processes.
It follows from \eqref{eq:charact-func-ggBm} that ggBm has an elliptical
distribution, see Section~2 in \cite{DaSilva2014} or Section~3
in \cite{GJ15}. On the other hand, ggBm is also given as a product
(see \cite{Mura_Pagnini_08} for $d=1$) of two processes as follows
\begin{equation}
\big\{ B_{\beta,\alpha}(t),\,t\geq0\big\}\overset{\mathcal{L}}{=}\big\{\sqrt{Y_{\beta}}B^{\alpha/2}(t),\,t\geq0\big\}.\label{eq:repr-Mura}
\end{equation}
Here, $\overset{\mathcal{L}}{=}$ means equality in law, the nonnegative
random variable $Y_{\beta}$ has density $M_{\beta}$ and $B^{\alpha/2}$
is a $d$-dimensional fBm with Hurst parameter $\alpha/2$ and independent
of $Y_{\beta}$. 

We give another representation of ggBm $B_{\beta,\alpha}$ as
a subordination of fBm (see Prop.~2.14 in \cite{ERdS2023} for $d=1$)
which is used below. For completeness, we give the short proof.
\begin{prop}
\label{prop:subord-ggBm}The ggBm has the following representation
\begin{equation}
\big\{ B_{\beta,\alpha}(t),\,t\geq0\big\}\overset{\mathcal{L}}{=}\big\{ B^{\alpha/2}(tY_{\beta}^{1/\alpha}),\,t\geq0\big\}.\label{eq:repr-our}
\end{equation}
\end{prop}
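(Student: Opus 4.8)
The plan is to use that the law of a ggBm on path space is determined by its finite–dimensional distributions, together with the product representation~\eqref{eq:repr-Mura} recorded above and the self-similarity of fractional Brownian motion. Write $Y:=Y_{\beta}$ and $B:=B^{\alpha/2}$, with $Y$ and $B$ independent and $Y$ having density $M_{\beta}$, and set $X(t):=B(tY^{1/\alpha})$. The process $X$ has continuous paths, since for fixed $\omega$ the map $t\mapsto tY(\omega)^{1/\alpha}$ is continuous and $B$ has continuous paths, and $X(0)=B(0)=0$ almost surely; hence it suffices to show that $X$ has the same finite-dimensional distributions as $B_{\beta,\alpha}$.

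The main step is a conditioning argument. Conditionally on $Y=\tau$, independence of $Y$ and $B$ gives that the process $\{X(t)\}_{t\ge0}$ has the law of $\{B(t\tau^{1/\alpha})\}_{t\ge0}$; and since fractional Brownian motion with Hurst parameter $\alpha/2$ is $(\alpha/2)$-self-similar, $\{B(\tau^{1/\alpha}t)\}_{t\ge0}\overset{\mathcal{L}}{=}\{(\tau^{1/\alpha})^{\alpha/2}B(t)\}_{t\ge0}=\{\sqrt{\tau}\,B(t)\}_{t\ge0}$. Therefore, for every bounded measurable functional $F$ depending on finitely many coordinates,
\begin{align*}
\mathbb{E}\big[F(X)\big] & =\int_{0}^{\infty}\mathbb{E}\big[F\big((B(t\tau^{1/\alpha}))_{t}\big)\big]\,M_{\beta}(\tau)\,\mathrm{d}\tau\\
 & =\int_{0}^{\infty}\mathbb{E}\big[F\big((\sqrt{\tau}\,B(t))_{t}\big)\big]\,M_{\beta}(\tau)\,\mathrm{d}\tau=\mathbb{E}\big[F\big((\sqrt{Y}\,B(t))_{t}\big)\big],
\end{align*}
where the first and last equalities use the independence of $Y$ and $B$. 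Hence $X\overset{\mathcal{L}}{=}\{\sqrt{Y_{\beta}}\,B^{\alpha/2}(t)\}_{t\ge0}$, and by~\eqref{eq:repr-Mura} this last process is equal in law to $B_{\beta,\alpha}$, which is the claim.

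Alternatively, and as a check on the normalising constants, one can verify~\eqref{eq:repr-our} directly on characteristic functions. Fix $0\le t_{1}<\dots<t_{n}$ and $\theta\in(\mathbb{R}^{d})^{n}$. Conditionally on $Y=\tau$, the vector $(B(t_{1}\tau^{1/\alpha}),\dots,B(t_{n}\tau^{1/\alpha}))$ is centred Gaussian with the $d$ spatial coordinates independent, and by the self-similarity identities $(t_{k}\tau^{1/\alpha})^{\alpha}=\tau t_{k}^{\alpha}$ and $|t_{k}\tau^{1/\alpha}-t_{j}\tau^{1/\alpha}|^{\alpha}=\tau|t_{k}-t_{j}|^{\alpha}$ its covariance matrix (per coordinate) is $\tau$ times that at the times $t_{1},\dots,t_{n}$, hence a fixed multiple of $\tau\gamma_{\alpha}$. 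The conditional characteristic function is therefore of the form $\exp\big(-\tfrac12\tau\cdot c\sum_{j=1}^{d}(\theta_{\cdot,j},\gamma_{\alpha}\theta_{\cdot,j})_{\mathbb{R}^{n}}\big)$ with $c$ determined by the fBm normalisation; integrating in $\tau$ against $M_{\beta}$ and using the Laplace transform identity~\eqref{eq:M_wright} turns this into $E_{\beta}\big(-c\sum_{j=1}^{d}(\theta_{\cdot,j},\gamma_{\alpha}\theta_{\cdot,j})_{\mathbb{R}^{n}}\big)$, which is the right-hand side of~\eqref{eq:charact-func-ggBm}.

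I do not expect a serious obstacle: the argument is a short conditioning computation. The points requiring a little care are that the law of $B_{\beta,\alpha}$ on $C([0,\infty),\mathbb{R}^{d})$ is pinned down by its finite-dimensional distributions (used above via continuity of $X$ and $X(0)=0$); the interchange of expectation and the $\mathrm{d}\tau$-integral, which is legitimate because $M_{\beta}\ge0$ is integrable and the integrands are bounded ($|F|\le\|F\|_{\infty}$, or $|e^{\mathrm{i}\,\cdot}|\le1$ in the characteristic-function version); and the passage from conditional to unconditional equality in law, which is the standard disintegration carried out above. Tracking the normalising constant of fractional Brownian motion is the only book-keeping needed to recover the exact constants in~\eqref{eq:charact-func-ggBm}.
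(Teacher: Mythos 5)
Your proposal is correct and follows essentially the same route as the paper: condition on $Y_\beta=\tau$, use independence and the $\alpha/2$-self-similarity of fBm to replace $B^{\alpha/2}(t\tau^{1/\alpha})$ by $\sqrt{\tau}\,B^{\alpha/2}(t)$, disintegrate back, and identify the result with the product representation \eqref{eq:repr-Mura}. The paper carries this out on characteristic functions of finite-dimensional distributions, which is just the special case $F=\exp(\mathrm{i}\sum_k(\theta_k,\cdot))$ of your argument.
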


\begin{proof}
We must show that both representations \eqref{eq:repr-Mura} and
\eqref{eq:repr-our} have the same finite-dimensional distribution.
For every $\theta=(\theta_{1},\ldots,\theta_{n})\in(\mathbb{R}^{d})^{n}$,
we have 
\begin{align*}
&\mathbb{E}\left[\exp\left(\mathrm{i}\sum_{k=1}^{n}\big(\theta_{k},B^{\alpha/2}(t_{k}Y_{\beta}^{1/\alpha})\big)\right)\right]\\ & =  \int_{0}^{\infty}\mathbb{E}\left[\exp\left(\mathrm{i}\sum_{k=1}^{n}\big(\theta_{k},B^{\alpha/2}(t_{k}y^{1/\alpha})\big)\right)\right]M_{\beta}(y)\,\mathrm{d}y\\
 & =  \int_{0}^{\infty}\mathbb{E}\left[\exp\left(\mathrm{i}\sum_{k=1}^{n}\big(\theta_{k},y^{1/2}B^{\alpha/2}(t_{k})\big)\right)\right]M_{\beta}(y)\,\mathrm{d}y\\
 & =  \mathbb{E}\left[\exp\left(\mathrm{i}\sum_{k=1}^{n}\big(\theta_{k},Y_{\beta}^{1/2}B^{\alpha/2}(t_{k})\big)\right)\right].
\end{align*}
In the second equality, we used the $\alpha/2$-self-similarity of
fBm. This completes the proof.
\end{proof}


\section{The Green Measure for Generalized Grey Brownian Motion}
\label{sec:GM-ggBm}In this section we show the existence of the Green
measure for the ggBm, see \eqref{eq:potential_of_f} and \eqref{eq:green-measure}.
Let us begin by discussing the existence of the Green measure for a
general stochastic process $X$. 

Let $X=\{X(t),\;t\ge0\}$ be a stochastic process in $\mathbb{R}^{d}$
starting from $x\in\mathbb{R}^{d}$. If $X(t)$, $t\ge0$, has a probability
distribution $\rho_{X(t)}(x,\cdot)$, then Eq.~\eqref{eq:potential_of_f}
becomes
\begin{equation}
V_{X}(x,f)=\int_{0}^{\infty}\int_{\mathbb{R}^{d}}f(y)\,\rho_{X(t)}(x,\mathrm{d}y)\,\mathrm{d}t.\label{eq:potential-f}
\end{equation}
Then, applying the Fubini theorem, the Green measure $\mathcal{G}_{X}(x,\cdot)$
of $X$ is given by 
\[
\mathcal{G}_{X}(x,\mathrm{d}y)=\int_{0}^{\infty}\rho_{X(t)}(x,\mathrm{d}y)\,\mathrm{d}t,
\]
assuming the existence of $\mathcal{G}_{X}(x,\cdot)$ as a Radon measure
on $\mathbb{R}^{d}$. That is, for every bounded Borel set $B\in\mathcal{B}_{b}(\mathbb{R}^{d})$
we have 
\[
\mathcal{G}_{X}(x,B)=\int_{0}^{\infty}\rho_{X(t)}(x,B)\,\mathrm{d}t<\infty.
\] 
If the probability distribution $\rho_{X(t)}(x,\cdot)$
is also absolutely continuous with respect to the Lebesgue measure, say
$\rho_{X(t)}(x,\mathrm{d}y)=\rho_{t}(x,y)\,\mathrm{d}y$, then the
function 
\begin{equation}
g_{X}(x,y):=\int_{0}^{\infty}\rho_{t}(x,y)\,\mathrm{d}t,\quad\forall y\in\mathbb{R}^{d},\label{eq:Green_function}
\end{equation}
is called the Green function of the stochastic process $X$. Moreover,
the Green measure in this case is given by $\mathcal{G}_{X}(x,\mathrm{d}y)=g_{X}(x,y)\,\mathrm{d}y$. 

This leads us to the following definition of the Green measure of a stochastic
process $X.$
\begin{defn}
\label{def:SP-GM} Let $X=\{X(t),\;t\ge0\}$ be a stochastic process
on $\mathbb{R}^{d}$ starting from $x\in\mathbb{R}^{d}$ and $\rho_{X(t)}(x,\cdot)$
be the probability distribution of $X(t)$, $t\ge0$. The Green measure
of $X$ is defined as a Radon measure on $\mathbb{R}^{d}$ by 
\[
\mathcal{G}_{X}(x,B):=\int_{0}^{\infty}\rho_{X(t)}(x,B)\,\mathrm{d}t,\;\;B\in\mathcal{B}_{b}(\mathbb{R}^{d}),
\]
or 
\[
\int_{\mathbb{R}^{d}}f(y)\mathcal{G}_{X}(x,\mathrm{d}y)=\int_{\mathbb{R}^{d}}f(y)\int_{0}^{\infty}\rho_{X(t)}(x,dy)\,\mathrm{d}t,\;\;f\in C_{0}(\mathbb{R}^{d})
\]
whenever these integrals exist. 
\end{defn}

In other words, $\mathcal{G}_{X}(x,B)$ is the expected length of
time the process remains in $B$. 

In order to state the main theorem which establishes the existence
of the Green measure for ggBm, first, we introduce a proper Banach
space of functions $f:\mathbb{R}^{d}\longrightarrow\mathbb{R}$ such
that
\[
\int_{0}^{\infty}f\big(x+B_{\beta,\alpha}(t)\big)\,\mathrm{d}t
\]
is finite $\mathbb{P}$-a.s. Without loss of generality, we may assume that
$f\ge0$ above. We define the space $CL(\mathbb{R}^{d})$ of continuous real-valued on $\mathbb{R}^{d}$ by

\[
CL(\mathbb{R}^{d}):=\big\{ f:\mathbb{R}^{d}\rightarrow\mathbb{R}\mid f\;\text{is continuous, bounded and}\;f\in L^{1}(\mathbb{R}^{d})\big\}.
\]
The space $CL(\mathbb{R}^{d})$ becomes a Banach space with the norm
\[
\|f\|_{CL}:=\|f\|_{\infty}+\|f\|_{1},\quad\forall f\in CL(\mathbb{R}^{d}),
\]
where $\|\cdot\|_{\infty}$ denotes the $\sup$-norm and $\|\cdot\|_{1}$
is the norm in $L^{1}(\mathbb{R}^{d}$). The choice of $CL(\mathbb{R}^{d})$
allows us to show that the family of random variables (also known as
perpetual integral functionals)
\[
\int_{0}^{\infty}f\big(x+B_{\beta,\alpha}(t)\big)\,\mathrm{d}t,\quad f\in CL(\mathbb{R}^{d})
\]
have finite expectations $\mathbb{P}$-a.s.
\begin{thm}
\label{thm:Green-meas-ggBm}Let $f\in CL(\mathbb{R}^{d})$ and $x\in\mathbb{R}^{d}$
be given and consider ggBm $B_{\beta,\alpha}$ with $d\alpha>2$
and $1<\alpha\le2$.
Then, the perpetual integral functional $\int_{0}^{\infty}f(x+B(t))\,\mathrm{d}t$
is finite $\mathbb{P}$-a.s.~and its expectation equals 
\begin{equation}
\begin{gathered}\mathbb{E}\left[\int_{0}^{\infty}f(x+B_{\beta,\alpha}(t))\,\mathrm{d}t\right]=D\int_{\mathbb{R}^{d}}\frac{f(x+y)}{|y|^{d-2/\alpha}}\,\mathrm{d}y,\end{gathered}
\label{eq:fract}
\end{equation}
where $D=D(\beta,\alpha,d)=\frac{1}{\alpha}2^{-1/\alpha}\pi^{-\frac{d}{2}}\Gamma\left(\frac{d}{2}-\frac{1}{\alpha}\right)\frac{\Gamma(1-\frac{1}{\alpha})}{\Gamma(1-\frac{\beta}{\alpha})}$. 
\end{thm}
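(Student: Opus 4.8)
The plan is to reduce the perpetual integral to an explicit Riesz-type potential by means of the subordination representation in Proposition~\ref{prop:subord-ggBm}, and then to read off both the constant $D$ and the finiteness from elementary integral identities. Since we may assume $f\ge 0$ (the general case follows by writing $f=f^{+}-f^{-}$ and treating each term separately), Tonelli's theorem on $\Omega\times(0,\infty)$ gives
\[
\mathbb{E}\Big[\int_{0}^{\infty}f(x+B_{\beta,\alpha}(t))\,\mathrm{d}t\Big]=\int_{0}^{\infty}\mathbb{E}\big[f(x+B_{\beta,\alpha}(t))\big]\,\mathrm{d}t .
\]
By \eqref{eq:repr-our} and conditioning on $Y_{\beta}=y$, the subordinated process $B^{\alpha/2}(ty^{1/\alpha})$ is, for fixed $y$, centred Gaussian on $\mathbb{R}^{d}$ with i.i.d.\ components of variance $(ty^{1/\alpha})^{\alpha}=t^{\alpha}y$, so
\[
\mathbb{E}\big[f(x+B_{\beta,\alpha}(t))\big]=\int_{0}^{\infty}\!\int_{\mathbb{R}^{d}}f(x+z)\,(2\pi t^{\alpha}y)^{-d/2}\,\mathrm{e}^{-|z|^{2}/(2t^{\alpha}y)}\,\mathrm{d}z\,M_{\beta}(y)\,\mathrm{d}y .
\]

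Next, all integrands being nonnegative, Tonelli permits interchanging the $t$-, $y$- and $z$-integrations freely, and I would perform the $t$-integration first. The substitution $u=t^{\alpha}$ followed by $v=1/u$ transforms
\[
\int_{0}^{\infty}(2\pi t^{\alpha}y)^{-d/2}\,\mathrm{e}^{-|z|^{2}/(2t^{\alpha}y)}\,\mathrm{d}t
=\frac{(2\pi y)^{-d/2}}{\alpha}\int_{0}^{\infty}v^{\,d/2-1/\alpha-1}\,\mathrm{e}^{-|z|^{2}v/(2y)}\,\mathrm{d}v ,
\]
which is a Gamma integral, convergent precisely when $\tfrac{d}{2}-\tfrac1\alpha>0$, i.e.\ $d\alpha>2$; evaluating it and simplifying the powers of $2$ and $\pi$ gives $\frac1\alpha 2^{-1/\alpha}\pi^{-d/2}\Gamma\!\big(\tfrac d2-\tfrac1\alpha\big)\,y^{-1/\alpha}\,|z|^{-(d-2/\alpha)}$. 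Integrating this against $M_{\beta}(y)\,\mathrm{d}y$ and invoking the moment formula \eqref{eq:Mbeta-moments} with $\delta=-1/\alpha$, which is admissible exactly because $\alpha>1$ forces $\delta>-1$, produces the factor $\Gamma(1-1/\alpha)/\Gamma(1-\beta/\alpha)$. Collecting the constants yields the claimed identity \eqref{eq:fract} with $D=D(\beta,\alpha,d)$ as stated.

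It then remains to show that the right-hand side of \eqref{eq:fract} is finite for every $f\in CL(\mathbb{R}^{d})$, which in turn shows that the perpetual integral, being a nonnegative random variable with finite expectation, is finite $\mathbb{P}$-a.s.\ (and for general $f$ that $\int_{0}^{\infty}|f(x+B_{\beta,\alpha}(t))|\,\mathrm{d}t<\infty$ $\mathbb{P}$-a.s.). To see this I would split $\int_{\mathbb{R}^{d}}|z|^{-(d-2/\alpha)}f(x+z)\,\mathrm{d}z$ into the regions $\{|z|\le1\}$ and $\{|z|>1\}$: on the unit ball the exponent satisfies $d-2/\alpha<d$, so $|z|^{-(d-2/\alpha)}$ is locally integrable and the piece is bounded by $\|f\|_{\infty}\int_{|z|\le1}|z|^{-(d-2/\alpha)}\,\mathrm{d}z$; outside the unit ball $d-2/\alpha>0$ gives $|z|^{-(d-2/\alpha)}\le1$, so that piece is bounded by $\|f\|_{1}$. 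Hence the integral is controlled by a constant multiple of $\|f\|_{CL}$, which completes the argument.

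The computations above are routine once the representation \eqref{eq:repr-our} is in hand; the two points requiring care are keeping the Tonelli bookkeeping legitimate (which is why the reduction to $f\ge 0$ is made first) and checking that the hypotheses $d\alpha>2$ and $\alpha>1$ are exactly the conditions under which the $t$-integral and the $M_{\beta}$-moment integral converge. I expect tracking the constant $D$ through the two substitutions to be the most error-prone step, rather than a conceptual obstacle.
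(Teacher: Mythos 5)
Your proposal is correct and follows essentially the same route as the paper: reduce to the Gaussian mixture density of $B_{\beta,\alpha}(t)$ (you obtain it via the subordination representation \eqref{eq:repr-our}, the paper quotes \eqref{eq:fdd-density-ggBm} with $n=1$ — the same formula), apply Tonelli, evaluate the $t$-integral as a Gamma integral under $d\alpha>2$, evaluate the $M_{\beta}$-moment with $\delta=-1/\alpha$ under $\alpha>1$, and conclude finiteness by splitting $\mathbb{R}^{d}$ at the unit sphere and bounding by $\|f\|_{\infty}$ and $\|f\|_{1}$. The constants and convergence conditions you track all match the paper's.
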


\begin{proof}
Given $x\in\mathbb{R}^{d}$ and $f\in CL(\mathbb{R}^{d})$ non-negative,
let $\rho_{\beta}(\cdot,t^\alpha)$ denote the density of $B_{\beta,\alpha}(t)$,
$t\ge0$, given by (see \eqref{eq:fdd-density-ggBm} with $n=1$)
\[
\rho_{\beta}(y,t^\alpha)=\frac{1}{(2\pi t^{\alpha})^{d/2}}\int_{0}^{\infty}\tau^{-d/2}\mathrm{e}^{-\frac{|y|^{2}}{2t^{\alpha}\tau}}M_{\beta}(\tau)\,\mathrm{d}\tau,\quad y\in\mathbb{R}^d.
\]
First, we show the equality \eqref{eq:fract}. It follows from the above considerations that 
\begin{multline*}
\mathbb{E}\left[\int_{0}^{\infty}f(x+B_{\beta,\alpha}(t))\,\mathrm{d}t\right]=\int_{0}^{\infty}\int_{\mathbb{R}^{d}}f(x+y)\rho_{t}^{\beta,\alpha}(y)\,\mathrm{d}y\,\mathrm{d}t.\\
=\int_{0}^{\infty}\int_{\mathbb{R}^{d}}f(x+y)\frac{1}{(2\pi t^{\alpha})^{d/2}}\int_{0}^{\infty}\tau^{-d/2}M_{\beta}(\tau)e^{-\frac{|y|^{2}}{2t^{\alpha}\tau}}\,\mathrm{d}\tau\,\mathrm{d}y\,\mathrm{d}t.
\end{multline*}
Using Fubini's Theorem, we first compute the $t$-integral and use
the assumption $d\alpha>2$. We obtain
\[
\int_{0}^{\infty}\frac{1}{(2\pi t^{\alpha}\tau)^{d/2}}e^{-\frac{|y|^{2}}{2t^{\alpha}\tau}}\,\mathrm{d}t=C(\alpha,d)\frac{\tau^{-\frac{1}{\alpha}}}{|y|^{d-2/\alpha}},
\]
where 
\[
C(\alpha,d):=\frac{1}{\alpha}2^{-1/\alpha}\pi^{-\frac{d}{2}}\Gamma\left(\frac{d}{2}-\frac{1}{\alpha}\right).
\]
Next we compute the $\tau$-integral using \eqref{eq:Mbeta-moments}
so that 
\[
\int_{0}^{\infty}\tau^{-1/\alpha}M_{\beta}(\tau)\,\mathrm{d}\tau=\frac{\Gamma(1-\frac{1}{\alpha})}{\Gamma(1-\frac{\beta}{\alpha})},\quad\alpha>1.
\] 
Combining gives
\[
\mathbb{E}\left[\int_{0}^{\infty}f(x+B_{\beta,\alpha}(t))\,\mathrm{d}t\right]=D\int_{\mathbb{R}^{d}}\frac{f(x+y)}{|y|^{d-2/\alpha}}\,\mathrm{d}y,
\]
where \[ D=D(\beta,\alpha,d)=C(\alpha,d)\frac{\Gamma(1-\frac{1}{\alpha})}{\Gamma(1-\frac{\beta}{\alpha})}.\]
Therefore, the equality \eqref{eq:fract} is shown.\\
Now we show that the right-hand side of \eqref{eq:fract} is finite
for every non-negative $f\in CL(\mathbb{R}^{d})$. To see this, we
may use the local integrability of $|y|^{d-2/\alpha}$ in $y$ and
obtain
\begin{eqnarray*}
\int_{\mathbb{R}^{d}}\frac{f(x+y)}{|y|^{d-2/\alpha}}\,\mathrm{d}y & =& \int_{\{|y|\le1\}}\frac{f(x+y)}{|y|^{d-2/\alpha}}\,\mathrm{d}y+\int_{\{|y|>1\}}\frac{f(x+y)}{|y|^{d-2/\alpha}}\,\mathrm{d}y\\
 & \le & C_{1}\|f\|_{\infty}+C_{2}\|f\|_{1}\leq C\|f\|_{CL}.
\end{eqnarray*}
 Therefore, the integral in \eqref{eq:fract} is, in fact, well defined. In
other words, the integral $\int_{0}^{\infty}f\big(x+B_{\beta,\alpha}(t)\big)\,\mathrm{d}t$
exists with probability 1. This completes the proof.
\end{proof}
As a consequence of the above theorem, we immediately obtain the Green
measure of ggBm $B_{\beta,\alpha}$, that is, comparing \eqref{eq:green-measure}
and \eqref{eq:fract}.
\begin{cor}
\label{cor:GM-ggBm}The Green measure of ggBm $B_{\beta,\alpha}$
for $d\alpha>2$ is given by
\[
\mathcal{G}_{\beta,\alpha}(x,\mathrm{d}y)=\frac{D}{|x-y|^{d-2/\alpha}}\,\mathrm{d}y.
\]
\end{cor}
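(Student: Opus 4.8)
The plan is to deduce the corollary directly from Theorem~\ref{thm:Green-meas-ggBm} together with Definition~\ref{def:SP-GM}, so that no new analysis is needed. First I would note that the ggBm $B_{\beta,\alpha}(t)$ started at $x$ has distribution $\rho_\beta(\cdot-x,t^\alpha)\,\mathrm{d}y$, and that Theorem~\ref{thm:Green-meas-ggBm} guarantees $\int_0^\infty f(x+B_{\beta,\alpha}(t))\,\mathrm{d}t<\infty$ $\mathbb{P}$-a.s.\ with finite expectation for every non-negative $f\in CL(\mathbb{R}^d)$. Splitting a general $f\in CL(\mathbb{R}^d)$ into positive and negative parts and applying Fubini's theorem then gives
\[
V_{\beta,\alpha}(f,x)=\mathbb{E}\!\left[\int_0^\infty f(x+B_{\beta,\alpha}(t))\,\mathrm{d}t\right],
\]
which is exactly the quantity evaluated in \eqref{eq:fract}.

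Next I would perform the translation $y\mapsto y-x$ on the right-hand side of \eqref{eq:fract} to rewrite it as
\[
V_{\beta,\alpha}(f,x)=D\int_{\mathbb{R}^d}\frac{f(y)}{|x-y|^{d-2/\alpha}}\,\mathrm{d}y=\int_{\mathbb{R}^d}f(y)\,\frac{D}{|x-y|^{d-2/\alpha}}\,\mathrm{d}y .
\]
Comparing with \eqref{eq:green-measure}, the candidate Green measure is the measure $\mu_x(\mathrm{d}y):=D\,|x-y|^{2/\alpha-d}\,\mathrm{d}y$. Since $d\alpha>2$ forces $d-2/\alpha<d$, the kernel $|x-y|^{2/\alpha-d}$ is locally integrable, so $\mu_x$ assigns finite mass to every bounded Borel set and is therefore a Radon measure, as required by Definition~\ref{def:SP-GM}.

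Finally I would close the argument by a uniqueness step: two Radon measures on $\mathbb{R}^d$ that integrate every $\varphi\in C_c(\mathbb{R}^d)$ to the same value must coincide, and $C_c(\mathbb{R}^d)\subset CL(\mathbb{R}^d)$, so the identity $\int f\,\mathrm{d}\mathcal{G}_{\beta,\alpha}(x,\cdot)=\int f\,\mathrm{d}\mu_x$ for all $f\in CL(\mathbb{R}^d)$ forces $\mathcal{G}_{\beta,\alpha}(x,\cdot)=\mu_x$. The only delicate point is this measure-determining property of the test class, but it is entirely standard (Riesz representation); all the real work — the existence of the perpetual integral and the explicit evaluation of its expectation — has already been carried out in Theorem~\ref{thm:Green-meas-ggBm}, so I do not expect a genuine obstacle here.
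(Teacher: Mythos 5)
Your proposal is correct and follows essentially the same route as the paper, which obtains the corollary ``immediately'' by comparing \eqref{eq:green-measure} with \eqref{eq:fract}; you simply make explicit the steps the authors leave implicit (Fubini/Tonelli to identify $V_{\beta,\alpha}(f,x)$ with the expectation in \eqref{eq:fract}, the translation $y\mapsto y-x$, local integrability of the kernel, and the standard measure-determining argument). No gap.
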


\begin{rem}
\label{rem:green-measure}
\begin{enumerate}
\item It is possible to show that given $f\neq0$, the perpetual integral
$\int_{0}^{\infty}f(x+B_{\beta,\alpha}(t))\,\mathrm{d}t$ is a non-constant random
variable. As a consequence, for $f\ge0$ the variance of $\int_{0}^{\infty}f(x+B_{\beta,\alpha}(t))\,\mathrm{d}t$
is strictly positive. The proof uses the notion of conditional full
support of ggBm. We will not provide a detailed explanation of this result that closely follows the ideas of Theorem~2.2 in \cite{KMS2020} to which we address the interested readers.
\item Note also that the functional in \eqref{eq:potential_of_f} 
\[
V_{\beta,\alpha}(\cdot,x):CL(\mathbb{R}^{d})\longrightarrow\mathbb{R}
\]
is continuous. In fact, from the proof of Theorem~\ref{thm:Green-meas-ggBm}
 for any $f\in CL(\mathbb{R}^{d})$ yields
\[
|V_{\beta,\alpha}(f,x)|\le K\|f\|_{CL},
\]
where $K$ is a constant depending on the parameters $\beta,\alpha$,
and $d$. 
\end{enumerate}
\end{rem}

\section{Discussion and Conclusions}

\label{sec:Discussion-and-Conclusions}
We have derived the Green measure for the class of stochastic processes called the generalized grey Brownian motion in Euclidean space $\mathbb{R}^d$ for $d\ge2$. This class includes, in particular, fractional Brownian motion and other non-Gaussian processes. To address the case where $d=1$, a renormalization process is needed. However, this will be postponed to future work. For $\beta=\alpha=1$ ggBm $B_{1,1}$ is nothing but a Brownian motion. In this case, the Green measure exists for $d\ge3$. 

The relationship between the Green measure and the local time of the ggBm can be described as follows. For any $T>0$ and a continuous function $f:\mathbb{R}^d\longrightarrow\mathbb{R}$, the integral functional
\begin{equation}\label{eq:funct-integral-T}
\int_0^Tf(B_{\beta,\alpha}(t))\,\mathrm{d}t    
\end{equation}
is well-defined. For $d=1$ the integral \eqref{eq:funct-integral-T} with $f\in L^1(\mathbb{R})$ is represented as
\[
\int_0^Tf(B_{\beta,\alpha}(t))\,\mathrm{d}t =\int_\mathbb{R}f(x)L_{\beta,\alpha}(T,x)\,\mathrm{d}x,
\]
where $L_{\beta,\alpha}(T,x)$ is the local time of ggBm up to time $T$ at the point $x$, see \cite{DaSilva2014,GJ15}.  The Green measure corresponds to the asymptotic behaviour in $T$ of the expectation of local time $L_{\beta,\alpha}(T,x)$.  The existence of this asymptotic depends on the dimension $d$ and the transient or recurrent properties of the process. 

\subsection*{Acknowledgments}

This research was funded by FCT-Funda{\c c\~a}o para a Ci{\^e}ncia e a Tecnologia, Portugal grant number UIDB/MAT/04674/2020,  \url{https://doi.org/10.54499/UIDB/04674/2020} through the Center for Research in Mathematics and Applications (CIMA) related to the Statistics, Stochastic Processes and Applications (SSPA) group.

\end{document}